\documentclass[10pt,a4paper]{article}
\usepackage{amsmath}
\usepackage[utopia]{mathdesign}

\usepackage{cite}
\usepackage[left=2.5cm,right=2.5cm,top=2.5cm,bottom=2.5cm]{geometry}
\usepackage[unicode]{hyperref}
\usepackage{graphicx,xcolor}
\usepackage[notref,notcite]{showkeys}
\usepackage{parskip}
\usepackage{comment}
\setlength{\parskip}{1ex}

\usepackage{enumerate} 
\usepackage[shortlabels]{enumitem} 
\numberwithin{equation}{section}



\usepackage[thref,thmmarks,standard,amsmath,hyperref]{ntheorem}
\theoremheaderfont{\normalfont\bfseries}
\theorembodyfont{\normalfont}
\theoremseparator{.}

\theorembodyfont{\itshape}
\renewtheorem{theorem}{Theorem}[section]
\renewtheorem{proposition}[theorem]{Proposition}
\renewtheorem{lemma}[theorem]{Lemma}
\renewtheorem{corollary}[theorem]{Corollary}

\theorembodyfont{\normalfont}
\renewtheorem{definition}[theorem]{Definition}
\renewtheorem{remark}[theorem]{Remark}
\renewtheorem{example}[theorem]{Example}

\theoremstyle{plain}
\theoremstyle{nonumberplain}
\theoremsymbol{\ensuremath{\square}}
\renewtheorem{proof}{Proof}

\title{\bf On Characterizations of $\sigma$-Quasiconvexity}
\author{Nguyen Xuan Duy Bao$^{1,2}$, Nguyen Mau Nam$^3$}
\date{}

 
\def\R{\mathbb{R}}
\def\Bar{\overline}
\def\la{\langle}
\def\ra{\rangle}

\begin{document}
\maketitle

\footnotetext[1]{\strut Department of Applied Mathematics, Faculty of Applied Sciences, Ho Chi Minh City University of Technology (HCMUT) \\ \hspace*{1em} 268 Ly Thuong Kiet Street, Dien Hong Ward, Ho Chi Minh City, Vietnam}
\footnotetext[2]{\strut Vietnam National University Ho Chi Minh City, Linh Xuan Ward, Ho Chi Minh City, Vietnam \\ \hspace*{1em} Email: \texttt{nxdbao@hcmut.edu.vn}}
\footnotetext[3]{\strut Fariborz Maseeh Department of Mathematics and Statistics, Portland State University, Portland, OR 97207, USA \\ \hspace*{1em} Email: \texttt{mnn3@pdx.edu}}

\small
{\bf Abstract.} We revisit classical gradient characterizations of quasiconvexity and provide corrected proofs that close gaps in earlier arguments. For the differentiable case of $\sigma$-quasiconvexity, we establish the full equivalence between several first-order conditions, resolving a remaining implication left open in the recent literature. Our approach yields a concise, self-contained proof of a classical characterization originally stated in the 1970s and sharpens the first-order theory for strong quasiconvexity.

\noindent {\bf Keywords.} Quasiconvexity $\cdot$ $\sigma$-quasiconvexity $\cdot$ Gradient characterizations $\cdot$ Generalized monotonicity.

\noindent {\bf Mathematics Subject Classifications (2020).} 49J52, 49J53, 90C26, 26B25, 49K35
\normalsize

\section{Introduction and Preliminaries}
Quasiconvexity is a classical notion in optimization and economics, providing a broad and flexible framework in which local minimality still guarantees global minimality.. A  function is quasiconvex  when all its sublevel sets are convex, and under differentiability the celebrated Arrow--Enthoven condition \cite{arrow61} characterizes this property through the first-order implication
    $$
    f(x)\le f(y)\quad \Longrightarrow\quad \langle \nabla f(y),y-x\rangle \ge 0 ,
    $$
which links quasiconvexity to quasimonotonicity of the gradient. This simple inequality plays a foundational role in generalized convexity, equilibrium theory, and variational analysis. Over the years, several authors have extended and refined related concepts; see, for instance, classical works on generalized convexity and monotonicity \cite{hadjisavvas05,cambini09} and the early contributions of Silverman and Jovanovi\'c \cite{silverman73,jovanovic89,jovanovic93}.

A more restrictive yet significantly more informative notion is \emph{strong quasiconvexity}, first introduced by Polyak \cite{polyak66} as a uniform variant of quasiconvexity. Strongly quasiconvex functions are not required to be convex, yet they possess curvature-like structure and admit a unique global minimizer. The class is broad enough to include many nonconvex functions of interest, while strong enough to imply stability properties and quadratic growth around the minimizer. Recent advances have renewed interest in this notion: Lara's proof that every strongly quasiconvex function is $2$-supercoercive \cite{lara22} settled the existence of minimizers and enabled algorithmic developments. Subsequent works demonstrated fast convergence for continuous gradient flows \cite{goudou09}, connections with the Polyak–Lojasiewicz condition \cite{karimi16,aujol22}, and improvements for equilibrium problems and extragradient schemes \cite{lara24}.

Despite this progress, a fundamental analytical question has remained unresolved: whether strong quasiconvexity itself admits a simple first-order characterization analogous to the Arrow--Enthoven condition for quasiconvexity. Although Polyak’s early work already indicates that a gradient inequality is necessary \cite{polyak66}, establishing its sufficiency has proven difficult. The Russian article of Vladimirov, Nesterov, and Chekanov \cite{vladimirov78} contains a formulation of an equivalent condition, but its proofs are inaccessible and have never been fully reconstructed. More recently, Lara, Marcavillaca, and Vuong \cite{lara25} verified only one implication of a generalized monotonicity-type criterion, leaving the converse direction open. 


The purpose of this paper is to provide complete gradient-based characterizations of $\sigma$-quasiconvexity in the differentiable setting. First, we refine classical results on differentiable quasiconvexity by supplying corrected and transparent proofs. Second, we establish full equivalence between several first-order conditions for $\sigma$-quasiconvexity, thereby closing the remaining gap in the implication stated in \cite{lara25} and offering the first complete modern proof of the characterization originally mentioned in \cite{vladimirov78}.

The paper is organized as follows. Section 2 introduces notation and collects the necessary preliminaries on quasiconvexity and directional derivatives. Section 3 contains the main results, where we establish the complete first-order characterizations of strong quasiconvexity in both the differentiable settings.

Throughout this paper, let $X$ be a real normed space, and let $X^{*}$ denote its topological dual. We use the standard duality pairing $\langle x^{*}, x\rangle = x^{*}(x)$ for $x^{*} \in X^{*}$ and $x \in X$.

\begin{definition}[{$\sigma$-strong quasiconvexity}] Let $\Omega$ be a nonempty convex subset of $X$. For $\sigma \ge 0$, a function $f: \Omega \to \Bar{\R}$ is said to be \emph{$\sigma$-quasiconvex} if for all $x,y \in \Omega$ and all $\lambda \in (0,1)$,
\begin{equation}\label{eq:SQC}
	f(\lambda x+(1-\lambda)y) \leq \max\{f(x),f(y)\} - \dfrac{\sigma}{2}\lambda(1-\lambda)\,\|x-y\|^2.
\end{equation}

\begin{enumerate}[\rm (i)]
    \item If $\sigma=0$, then we say that $f$ is {\em quasiconvex};
    \item If $\sigma>0$, then we say that $f$ is {\em strongly quasiconvex} with parameter $\sigma$, or $f$ is $\sigma$-{\em strongly quasiconvex}.
\end{enumerate}
\end{definition}

\section{Differentiable characterizations for quasiconvexity}
In this section, we focus on the smooth setting where $f$ is continuously Fr\'echet differentiable. Our aim is to describe quasiconvexity--and its stronger form--using only information from the gradient. For convex functions, it is well known that monotonicity of the gradient plays a central role. We show that a similar monotonicity-type condition also characterizes quasiconvexity in the smooth case, giving an equivalent reformulation that will later guide the nonsmooth generalizations in Section 3. The next theorem presents three equivalent gradient-based conditions for quasiconvexity.

Although Theorem \ref{thm:qc} is classical--see the works of Arrow and Enthoven \cite{arrow61} 
and later developments in \cite{cambini09, giorgi14, hadjisavvas05, ponstein67}--our presentation follows the perspective of Giorgi \cite{giorgi14}. 
Building on this source, we provide a shorter and more transparent proof that highlights the geometric idea behind the characterization.

However, Giorgi’s proof contains an issue in Lemma 2. In the non-quasiconvex case, the author defines
    $$
    \bar t=\inf\{t\in[t_1,t_2]:\varphi(t)=M\},
    \qquad
    M=\max\{\varphi(t):t\in[t_1,t_2]\}>\max\{\varphi(t_1),\varphi(t_2)\},
    $$
and claims that continuity of $\varphi$ ensures the inequalities
    $$
    \varphi(t) < M 
    \quad\text{and}\quad
    \varphi(t)>\max\{\varphi(t_1),\varphi(t_2)\},
    \qquad
    \forall\,t\in[\bar t-\varepsilon,\bar t).
    $$
The second inequality does not follow from the definition of $\bar t$ or from continuity.  
In general, one cannot guarantee that $\varphi(t)$ remains strictly above both endpoint values on a whole left--neighborhood of $\bar t$. This step is essential to apply the mean value theorem and deduce a point $t^{*}\in(\bar t-\varepsilon,\bar t)$ with $\varphi'(t^{*})>0$, so the argument breaks down.

For completeness, we therefore give an alternative proof of Theorem \ref{thm:qc}. 
This proof remains elementary, avoids the gap in Giorgi’s argument, and makes the geometric intuition as transparent as possible.

\medskip
\begin{theorem} \label{thm:qc} Let $f\colon X \to \R$ be a Fr\'echet continuously differentiable function. Then the following properties are equivalent:
\begin{enumerate}[(a)]
    \item The function $f$ is quasiconvex.

    \item For any $x, y\in X$ we have the implication
    $$[f(x) \leq f(y)] \Longrightarrow [\la \nabla f(y), y-x\ra \geq 0].$$

    \item For any $x, y\in X$, we have the implication
    $$[\la \nabla f(x), y-x\ra> 0]\Longrightarrow [\la \nabla f(y), y-x\ra\geq 0].$$
\end{enumerate}
    
\end{theorem}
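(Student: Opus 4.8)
The plan is to prove the equivalences in the cycle $(a)\Rightarrow(b)\Rightarrow(c)\Rightarrow(a)$, which keeps each individual implication geometrically transparent and confines the delicate analytic argument to a single step. Throughout I would reduce everything to the one-dimensional setting by introducing, for fixed $x,y\in X$, the auxiliary function $\varphi(t)=f\bigl(y+t(x-y)\bigr)$ for $t\in[0,1]$ (more generally on $\R$), which is continuously differentiable with $\varphi'(t)=\la\nabla f(y+t(x-y)),\,x-y\ra$. This restriction to segments is exactly how quasiconvexity and the two gradient conditions are naturally expressed, and it is the device that makes the proof elementary.

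\medskip

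For $(a)\Rightarrow(b)$, assume $f$ is quasiconvex and $f(x)\le f(y)$. Quasiconvexity along the segment from $y$ to $x$ gives $\varphi(t)\le\max\{\varphi(0),\varphi(1)\}=f(y)=\varphi(0)$ for all $t\in(0,1)$, so the point $t=0$ is a right-hand maximizer of $\varphi$ on $[0,1]$; hence the one-sided derivative satisfies $\varphi'(0)\le 0$, which is precisely $\la\nabla f(y),x-y\ra\le 0$, i.e. $\la\nabla f(y),y-x\ra\ge 0$. For $(b)\Rightarrow(c)$, I would argue by contraposition combined with a continuity/mean-value observation: suppose $\la\nabla f(x),y-x\ra>0$ but $\la\nabla f(y),y-x\ra<0$. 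The strict inequality $\la\nabla f(x),y-x\ra>0$ forces $f$ to strictly increase as one moves from $x$ a little toward $y$, so there is a point $z$ near $x$ on the segment with $f(z)>f(x)\ge$ (after a suitable comparison) enabling an application of $(b)$ that yields the opposite sign of $\la\nabla f(y),y-x\ra$, a contradiction; the bookkeeping here is routine once the sign of $\varphi'$ near the endpoints is read off correctly.

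\medskip

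The main obstacle is $(c)\Rightarrow(a)$, which is exactly the place where Giorgi's argument has the gap flagged in the excerpt. Here I would argue by contradiction: if $f$ is not quasiconvex, there exist $x,y$ and $\lambda\in(0,1)$ with $\varphi(\lambda)>\max\{\varphi(0),\varphi(1)\}$, so $M=\max_{[0,1]}\varphi>\max\{\varphi(0),\varphi(1)\}$ and the maximum is attained at an interior point. The correct way to produce the needed pair of points is \emph{not} to take a one-sided infimum of the level set (the step that fails), but rather to select an interior \emph{global} maximizer $t_0\in(0,1)$ of $\varphi$ and then locate, on each side of $t_0$, points where $\varphi'$ has a definite sign by the mean value theorem applied between an endpoint and $t_0$. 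Concretely, since $\varphi(t_0)>\varphi(0)$ there is $s\in(0,t_0)$ with $\varphi'(s)>0$, and since $\varphi(t_0)\ge\varphi(t)$ for $t$ slightly larger than $t_0$ we get a point $u$ with $\varphi'(u)\le 0$; translating back to the two parameters $p=y+s(x-y)$ and $q=y+u(x-y)$ on the same line and checking the sign of $\la\nabla f(\cdot),q-p\ra$ produces a violation of condition $(c)$. The care required is in choosing $t_0$ to be an \emph{interior maximizer} (rather than an endpoint of a level set) so that $\varphi'$ genuinely changes from positive to nonpositive across it, and in verifying that the direction vector $q-p$ is a positive multiple of $x-y$ so the two gradient pairings $\la\nabla f(p),q-p\ra$ and $\la\nabla f(q),q-p\ra$ inherit the signs of $\varphi'(s)$ and $\varphi'(u)$; this is the decisive step and the one that must be written with full rigor to close the gap.
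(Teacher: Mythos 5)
Your overall architecture (the cycle (a)\,$\Rightarrow$\,(b)\,$\Rightarrow$\,(c)\,$\Rightarrow$\,(a) via restriction to line segments) matches the paper's, and your (a)\,$\Rightarrow$\,(b) is fine. But the decisive step (c)\,$\Rightarrow$\,(a) --- the one you yourself flag as requiring full rigor --- has a genuine gap exactly where you describe it. On the far side of the interior maximizer $t_0$ you extract only a point $u$ with $\varphi'(u)\le 0$, using $\varphi(t_0)\ge\varphi(t)$ for $t$ slightly larger than $t_0$. Now apply (c) to the pair $p=y+s(x-y)$, $q=y+u(x-y)$ with $\varphi'(s)>0$: since $q-p$ is a positive multiple of $x-y$, the hypothesis $\la\nabla f(p),q-p\ra>0$ holds and (c) delivers $\la\nabla f(q),q-p\ra\ge 0$, i.e.\ $\varphi'(u)\ge 0$. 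Combined with your $\varphi'(u)\le 0$ this gives only $\varphi'(u)=0$, which is not a contradiction; violating (c) requires the \emph{strict} inequality $\varphi'(u)<0$. The fix is the one the paper uses: apply the mean value theorem between the interior point and the endpoint $t=1$, where the inequality $\varphi(t_0)>\varphi(1)$ is strict, to obtain $u\in(t_0,1)$ with $\varphi'(u)=\bigl(\varphi(1)-\varphi(t_0)\bigr)/(1-t_0)<0$. Once this is done, the interior \emph{maximizer} plays no role at all: any interior point $z_0$ with $f(z_0)>\max\{f(x),f(y)\}$ and two applications of the mean value theorem, one on each side of $z_0$, already produce $s<u$ with $\varphi'(s)>0$ and $\varphi'(u)<0$, which is the paper's argument.

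Your sketch of (b)\,$\Rightarrow$\,(c) is also not completable as written: a point $z$ near $x$ with $f(z)>f(x)$ gives no handle on $\nabla f(y)$, the ``suitable comparison'' is left unspecified, and the step is not mere bookkeeping along the lines you indicate. In fact no auxiliary point is needed. If $\la\nabla f(x),y-x\ra>0$, then $\la\nabla f(x),x-y\ra<0$, so the contrapositive of (b) applied to the swapped pair $(y,x)$ forces $f(x)<f(y)$; then (b) applied to $(x,y)$ gives $\la\nabla f(y),y-x\ra\ge 0$. (The paper's own proof of this implication goes through a last-maximizer construction to rule out $f(x)>f(y)$, but the two-line argument above suffices and is the one the paper itself uses in the analogous step of Theorem \ref{thm:sqc}.)
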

\begin{proof} The proof follows by the following chain: (a) $\Rightarrow$ (b) $\Rightarrow$ (c) $\Rightarrow$ (a).

\emph{(a) $\Rightarrow$ (b).} Suppose $f$ is quasiconvex and $f(x) \leq f(y)$. By the quasiconvexity,
    $$f(tx+(1-t)y)\leq f(y)$$
whenever $0<t<1$. Then
    $$\dfrac{f(y+t(x-y))-f(y)}{t} \leq 0.$$
whenever $0<t<1$. Letting $t \downarrow 0$ gives us the result.

\emph{(b) $\Rightarrow$ (c).} Let \(x, y \in X\) such that \(\la \nabla f(x), y - x \ra > 0\) and suppose for contradiction that \(f(x) > f(y)\). Note that
    $$\la \nabla f(x), y - x \ra = \lim_{t \rightarrow 0^+} \frac{f(x + t(y - x)) - f(x)}{t}.$$
Since \(\la \nabla f(x), y - x \ra > 0\), there exists \(0 < t < 1\) such that \(f(ty + (1 - t)x) > f(x)\). Let \(A \subset [0, 1]\) be the following set
    $$A = \left\{t \in [0, 1]: f(ty + (1 - t)x) = \sup_{w \in [x, y]}f(w)\right\}.$$
The continuity of \(f\) and the compactness of \([x, y]\) implies that \(A\) is non-empty and closed. Set \(t_0 = \max(A)\) and \(z = t_0x + (1 - t_0)y\). Note that \(f(z) > f(x) > f(y)\) so that \(t_0 < 1\). Take \(\delta > 0\) such that for all \(w \in X\) satisfying \(\|w - z\| < \delta\), we have
    $$f(w) \geq f(x).$$
Set \(z' = z + \min\left\{\dfrac{\delta}{2\|y - x\|}, \dfrac{1 - t_0}{2}\right\}(y - x)\), then by the mean value theorem there exists \(w \in (z, z')\) such that
    $$\la \nabla f(w), z' - z \ra = f(z') - f(z) < 0.$$
Hence,
    $$\la \nabla f(w), w - x \ra = \frac{\|w - x\|}{\|z' - z\|}\la \nabla f(w), z' - z \ra < 0.$$
Note that \(f(w) \geq f(x)\) as well, which contradicts (b). Therefore, if \(\la \nabla f(x), y - x \ra > 0\), then \(f(x) \leq f(y)\). By (b), \(\la \nabla f(y), y - x \ra \geq 0\), which implies (c).

\emph{(c) $\Rightarrow$ (a).} We will prove by contradiction. Suppose on the contrary that there exist $x_0, y_0\in X$ and $z_0\in (x_0, y_0)$ such that
    $$f(z_0)>\max\{f(x_0), f(y_0)\}.$$
Since $f(z_0)-f(x_0)>0$, by the mean value theorem, there exists $x_1 \in (x_0, z_0)$ such that
    $$f(z_0)-f(x_0)=\la \nabla f(x_1), z_0-x_0\ra>0.$$
This implies that for all $y\in (x_1, y_0]$ we have
    $$\la \nabla f(x_1), y-x_1\ra>0.$$
Since $f(z_0)-f(y_0)>0$, by the mean value theorem again, there exists $x_2 \in (z_0, y_0)$ such that
    $$f(z_0)-f(y_0)=\la \nabla f(x_2), z_0-y_0\ra>0.$$
Then 
    $$
    \la \nabla f(x_2), x_1-x_2\ra>0 \quad \text{and} \quad \la \nabla f(x_1), x_1-x_2\ra<0.
    $$
This is a contradiction to (c).
\end{proof}

\medskip
The proof of Theorem \ref{thm:qc} relies on several one-dimensional monotonicity arguments along line segments. To support these ideas, we first present a simple auxiliary lemma establishing a monotonic bound for scalar differentiable functions. This lemma captures the essential directional behavior of quasiconvex functions in one dimension and will be employed in the subsequent remark to provide alternative, more direct proofs of the implication (b) $\Rightarrow$ (a).

\medskip
\begin{lemma}\label{lem:qc} Let $f\colon [a, b] \to \R$ be a differentiable function. Suppose for any $x \in (a, b)$ either $f^\prime(x) \leq 0$ or $f(x) \leq f(a)$. Then $f(b) \leq f(a)$.
\end{lemma}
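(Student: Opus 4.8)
The plan is to argue by contradiction by isolating the \emph{last point} at which $f$ drops down to the level $f(a)$. Assume $f(b) > f(a)$ and consider the sublevel set $S = \{x \in [a,b] : f(x) \le f(a)\}$, which contains $a$ and is closed because $f$ is continuous (being differentiable). Set
$$
c = \sup S .
$$
Closedness of $S$ gives $c \in S$, so $f(c) \le f(a)$, and the assumption $f(b) > f(a)$ forces $b \notin S$, hence $c < b$.

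The crux is to exploit the dichotomy hypothesis on the interval $(c,b)$. By definition of the supremum, every $x \in (c,b]$ lies outside $S$, so $f(x) > f(a)$ there; in particular the alternative ``$f(x) \le f(a)$'' of the hypothesis fails for each $x \in (c,b)$, leaving only $f'(x) \le 0$. Thus $f' \le 0$ throughout $(c,b)$. Since $f$ is continuous on $[c,b]$ and differentiable on $(c,b)$, the mean value theorem yields that $f$ is non-increasing on $[c,b]$, so $f(b) \le f(c) \le f(a)$. This contradicts $f(b) > f(a)$ and completes the argument.

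The step I expect to require the most care is the correct location of the ``last exit point'' $c$: one must verify both that $S$ is closed---so that the supremum is actually attained and $f(c) \le f(a)$ holds---and that $c < b$ strictly, which is exactly where continuity of $f$ together with the contradiction hypothesis $f(b) > f(a)$ enter. Once $c$ is pinned down, the implication ``$f$ stays above $f(a)$ on $(c,b]$ $\Rightarrow$ $f' \le 0$ on $(c,b)$'' is immediate from the stated dichotomy, and the monotonicity conclusion via the mean value theorem is routine. I would phrase everything as a single contradiction argument rather than splitting into cases, since the case $c = b$ is subsumed once we observe that $f(b) > f(a)$ already excludes it.
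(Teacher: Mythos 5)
Your proof is correct and takes essentially the same route as the paper's: both locate the supremum of the sublevel set $\{x : f(x)\le f(a)\}$ and then apply the mean value theorem on the remaining subinterval, where the dichotomy forces $f'\le 0$. Including the endpoint $a$ in your set $S$ neatly removes the paper's case distinction between the set being empty or nonempty, but the substance of the argument is identical.
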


\begin{proof}  Consider the set
    $$A = \{x \in (a, b) \mid f(x) \leq f(a)\}.$$
\underline{Case 1}: $A=\emptyset$. Then $f(x)>f(a)$ for all $x \in (a, b)$, which implies that $f^\prime(x) \leq 0$ for all $x \in (a, b)$, so $f$ is nonincreasing on this interval. Then we can easily see that $f(b) \leq f(a)$.

\underline{Case 2}: $A$ is nonempty. Let $\alpha=\sup A \leq b$. It suffices to show that $\alpha=b$. Suppose on the contrary that $\alpha<b$. By the mean value theorem, there exists $c \in (\alpha, b)$ such that
    $$f^\prime(c)=\frac{f(b)-f(\alpha)}{b-\alpha}.$$
Since $c>\alpha$, we see that $f(c)>f(a)$. Thus, $f^\prime(c) \leq 0$, which implies that $f(b) \leq f(\alpha) \leq f(a)$. This completes the proof.
\end{proof}

\medskip
\begin{remark} Now, we provide two direct proofs of (b) $\Longrightarrow$(a) in the previous theorem.

\underline{Proof 1.} By contradiction, suppose that (b) is satisfied but $f$ is not quasiconvex. Then there exists $x_0, y_0\in X$ and $0<t<1$ such that
    $$f(t x_0+(1-t)y_0)>\max\{f(x_0), f(y_0)\}.$$
Let $z_0=tx_0+(1-t)y_0$. We then see that $f(z_0)>f(x_0)$ and $f(z_0)>f(y_0)$. Suppose that $f(x_0) \geq f(y_0)$. By the contrapositive in Lemma \ref{lem:qc} applied to the function $\varphi(t)=f((1-t)x_0+tz_0)$ on $[0, 1]$, there exists $c\in (x_0, z_0)$ such that $f(c)>f(x_0)$ or $\la \nabla f(c), z_0-x_0\ra>0$. Then $\la f(c), x_0-y_0\ra>0$. Furthermore, $f(c)>f(x_0)\geq f(y_0)$, so using (b) we have $\la \nabla f(c), c-x_0\ra\geq 0$. Then $\la\nabla f(c), y_0-x_0\ra\geq 0$. This yields a contradiction.    

\underline{Proof 2.} Suppose otherwise that $f$ is not quasiconvex. Then there exist $a, b, c \in X$ with $b \in [a, c]$ and $f(b) > \max\{f(a), f(c)\}$. By the mean value theorem, there exist $x_1 \in (a, b)$ and $x_2 \in (b, c)$ such that 
    $$
    \langle \nabla f(x_1), b - a \rangle > 0, \qquad 
    \langle \nabla f(x_2), b - c \rangle > 0.
    $$
Hence, for some $\theta_1, \theta_2 > 0$,
    $$
    \langle \nabla f(x_1), x_2 - x_1 \rangle = \theta_1 \langle \nabla f(x_1), b - a \rangle > 0,
    \qquad
    \langle \nabla f(x_2), x_1 - x_2 \rangle = \theta_2 \langle \nabla f(x_2), b - c \rangle > 0.
    $$
By the contrapositive of condition (b), these imply $f(x_2) > f(x_1)$ and $f(x_1) > f(x_2)$, a contradiction. Thus, $f$ must be quasiconvex.
\end{remark}

\section{Differentiable characterizations for strongly quasiconvexity}
We now extend the gradient--based characterization to the case of $\sigma$-quasiconvexity, or strong quasiconvexity, where a quadratic term is included in the defining inequality. The next theorem generalizes Theorem~\ref{thm:qc} by introducing a parameter $\sigma \ge 0$, which reduces to the classical quasiconvex case when $\sigma=0$. It shows that the strong quasiconvexity of a smooth function is entirely encoded in a modified gradient inequality that reflects a strengthened form of monotonicity.

\medskip
\begin{theorem} \label{thm:sqc}
	Let $\sigma \ge 0$ and let $f: X \to \mathbb{R}$ be a Fr\'echet continuously differentiable function. Consider the following conditions:
\begin{enumerate}[(a)]
	\item The function $f$ is $\sigma$-quasiconvex.

	\item For all $x,y \in X$, we have the implication
	$$\left[ f(x) \leq f(y) \right] \quad \Longrightarrow \quad \left[ \la\nabla f(y), x-y\ra \leq -\dfrac{\sigma}{2}\|x-y\|^2 \right].$$
    
	\item For all $x,y\in X$, we have the implication
	$$\left[ \la\nabla f(x), y-x\ra > -\dfrac{\sigma}{2}\|x-y\|^2 \right] \quad \Longrightarrow \quad \left[ \la\nabla f(y), x-y \ra \leq -\dfrac{\sigma}{2}\|x-y\|^2 \right].$$
\end{enumerate}
Then, (a) $\Rightarrow$ (b) $\Rightarrow$ (c).
\end{theorem}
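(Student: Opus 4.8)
The plan is to follow the same chain as in Theorem~\ref{thm:qc}, namely (a) $\Rightarrow$ (b) $\Rightarrow$ (c), carrying the quadratic correction $-\frac{\sigma}{2}\|x-y\|^2$ through each step; the case $\sigma=0$ then recovers the earlier argument verbatim. Note that only these two implications are required, so no converse direction needs to be addressed here.

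For (a) $\Rightarrow$ (b), I would argue exactly as in the quasiconvex case. Assuming $f(x)\le f(y)$, the maximum on the right-hand side of the defining inequality \eqref{eq:SQC} equals $f(y)$, so for every $\lambda\in(0,1)$ one has $f(y+\lambda(x-y))-f(y)\le -\frac{\sigma}{2}\lambda(1-\lambda)\|x-y\|^2$. Dividing by $\lambda>0$ and letting $\lambda\downarrow 0$, the difference quotient converges to the directional derivative $\langle\nabla f(y),x-y\rangle$ by Fr\'echet differentiability, while the right-hand side tends to $-\frac{\sigma}{2}\|x-y\|^2$. Passing to the limit preserves the inequality and yields (b). The only care needed is identifying $\max\{f(x),f(y)\}=f(y)$ and taking the one-sided limit correctly.

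The implication (b) $\Rightarrow$ (c) is where the argument becomes interesting, and the main point I would stress is that the symmetric form of the quadratic term (since $\|y-x\|=\|x-y\|$) makes it purely logical: no mean value theorem is needed, in contrast to the heavier route taken for Theorem~\ref{thm:qc}. Fix $x,y$ with $\langle\nabla f(x),y-x\rangle> -\frac{\sigma}{2}\|x-y\|^2$. I would first show $f(x)\le f(y)$ by contradiction: if instead $f(y)\le f(x)$, then applying (b) to the swapped pair $(y,x)$ gives $\langle\nabla f(x),y-x\rangle\le -\frac{\sigma}{2}\|x-y\|^2$, contradicting the hypothesis. Hence $f(x)\le f(y)$, and a direct application of (b) to the pair $(x,y)$ delivers $\langle\nabla f(y),x-y\rangle\le -\frac{\sigma}{2}\|x-y\|^2$, which is precisely the conclusion of (c).

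Thus the genuine obstacle here is not analytic but organizational: one must invoke (b) twice, once with the variables interchanged, and keep careful track of which inequalities are strict, so that the strict hypothesis of (c) collides cleanly with the non-strict conclusion produced by (b). Recognizing that the whole of (b) $\Rightarrow$ (c) reduces to this two-line contraposition is the key simplification, and it explains why the $\sigma$-case is no harder than the quasiconvex case despite the extra quadratic term.
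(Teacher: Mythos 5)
Your proof is correct and follows essentially the same route as the paper's: the limit of difference quotients from \eqref{eq:SQC} for (a) $\Rightarrow$ (b), and the two-fold contrapositive application of (b) with the roles of $x$ and $y$ swapped for (b) $\Rightarrow$ (c). No gaps.
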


\begin{proof}

\emph{(a) $\Rightarrow$ (b).} Assume that $f$ is $\sigma$-quasiconvex. Fix $x, y \in X$ and assume $f(x) \leq f(y)$. From \eqref{eq:SQC}, for all $t \in (0,1)$, we have
	$$f(y+t(x-y))  \leq f(y) - \dfrac{\sigma}{2}t(1-t)\|x-y\|^2.$$
Divide by $t$ and let $t\downarrow 0$, it follows that
	$$\la\nabla f(y),x-y\ra\le -\frac{\sigma}{2}\|x-y\|^2,$$
which is (b).

\emph{(b) $\Rightarrow$ (c).} Suppose that
	$$\la\nabla f(x), y-x\ra > -\dfrac{\sigma}{2}\|x-y\|^2.$$
If $f(y) \leq f(x)$, then applying (b) to the pair $(y, x)$ yields 
	$$\la\nabla f(x), y-x\ra \leq -\dfrac{\sigma}{2}\|x-y\|^2,$$
a contradiction. Hence $f(x) < f(y)$, and now applying (b) to $(x,y)$ gives 
	$$\la\nabla f(y), x-y \ra \leq -\dfrac{\sigma}{2}\|x-y\|^2,$$
which is (c).
\end{proof}

    We originally tried to prove the implication (c) $\Rightarrow$ (a), but our approach did not succeed. At present, we are unable to determine whether this implication holds or fails. Its validity therefore remains open: no proof is available, and no counterexample is known.

\bibliographystyle{mystyle}
\bibliography{References}
\end{document}